
\documentclass[a4paper,flegn]{amsart}

\usepackage{amsthm,amsmath,amssymb}
\usepackage{latexsym}
\usepackage{bm}
\usepackage[all]{xy}
\usepackage{url}

\newtheorem{thm}{Theorem}[section]
\newtheorem{cor}[thm]{Corollary}
\newtheorem{lem}[thm]{Lemma}
\newtheorem{prop}[thm]{Proposition}
\theoremstyle{definition}
\newtheorem{defn}[thm]{Definition}
\theoremstyle{remark}
\newtheorem{rem}[thm]{Remark}


\newcommand{\Grz}{\mbox{\textbf{Grz}}}
\newcommand{\KM}{\textbf{KM}}
\newcommand{\Int}{\textbf{Int}}
\newcommand{\mHC}{\mbox{\textbf{mHC}}}
\newcommand{\GL}{\mbox{\textbf{GL}}}

\newcommand{\kfourGrz}{\textbf{K4.Grz}}
\newcommand{\Lan}{\mbox{$\mathcal{L}$}}




\newcommand{\fA}{\mbox{$\mathfrak{A}$}}

\newcommand{\fB}{\mbox{$\mathfrak{B}$}}



\newcommand{\al}{\alpha}
\newcommand{\be}{\beta}
\newcommand{\ga}{\gamma}



\newcommand{\ve}{\vee}

\newcommand{\we}{\wedge}

\newcommand{\ra}{\rightarrow}





\newcommand{\on}{\bm{1}}

\newcommand{\set}[2]{\{#1 ~|~#2\}}

\newcommand{\next}[1]{\text{NE}#1}


\newcommand{\km}{\textbf{km}} 


\begin{document}
	\date{}
	
	\title[On the Equipollence of $\Int$ and $\KM$]{On the Equipollence of the Calculi $\Int$ and $\KM$}
	\author[Alexei~Muravitsky]{Alexei~Y.~Muravitsky}
	\address{
		Louisiana Scholars' College\\
		Northwestern State University\\
		Natchitoches, LA 71495\\
		USA}
	\email{alexeim@nsula.edu}

\begin{abstract}
Following A. Kuznetsov's outline, we restore Kuznetsov's syntactic proof of the assertoric equipollence of the intuitionistic propositional calculus and the proof-intuitionistic calculus \textbf{KM} ({\em Kuznetsov's Theorem}). Then, we show that this property is true for a broad class of modal logics on an intuitionistic basis, which includes, e.g., the modalized Heyting calculus \textbf{mHC}. The last fact is one of two key properties necessary for the commutativity of a diagram involving the lattices of normal extensions of four well-known logics.  Also, we give an algebraic interpretation of the  assertoric equipollence for subsystems of $\KM$. 
\end{abstract}

\maketitle

\section{Introduction}\label{S:introduction}
In this paper we discuss the assertoric equipollence between the intuitionistic propositional calculus, denoted here by $\Int$, and the proof-intuitionistic calculus, $\KM$, which was introduced by A. Kuznetsov, though not in a form it is commonly known today and not under its today's name.\footnote{See our survey~\cite{mur14a} about how Kuznetsov and the author came to the definition of $\KM$. The equivalence (in a strong sense) between Kuznetsov's version for $\KM$ and the one which is defined below was shown in~\cite{km86}, pp. 82--82. The name ``KM'' is due to Leo Esakia~\cite{esa06}.} 

We remind the reader that $\KM$ is a modal system on the intuitionistic basis (see definition below) and closely related to the G\"{o}del-L\"{o}b provability logic $\GL$ so that $\KM$ was proved to be embedded into $\GL$; cf.~\cite{km80}, p. 224. Extending this embedding onto all normal extensions of $\KM$ made it possible to show that the lattices of normal extensions of $\KM$, $\next{\KM}$, and $\GL$, $\next{\GL}$, are isomorphic;~cf.~\cite{mur85,mur89}. This isomorphism along with Kuznetsov's Theorem (see below) led Kuznetsov and the author to the following commutative diagram:
\[
\xymatrix{
	\text{NE}\KM \ar@<1ex>[r]^{\tau} \ar@<1ex>[r];[]^{\rho}
	\ar[d]_{\lambda} &\text{NE}\GL\ar[d]^{\mu}\\
	\text{NE}\Int \ar@<1ex>[r]^\sigma  \ar@<1ex>[r];[]^{\sigma^{-1}} &\text{NE}\Grz	
}
\]
\begin{center}
	Diagram 1
\end{center}
where $\next{\Int}$ and $\next{\Grz}$ are the lattices of normal extensions of $\Int$ and the Grzegorczyk logic $\Grz$, respectively, $\tau$ and $\rho$ are lattice isomorphisms (and the inverses of one another), $\lambda$ and $\mu$ are join epimorphisms, and $\sigma$ is a well-known lattice isomorphism underlying the Blok-Esakia theorem; cf.~\cite{blok1976,esakia1976} and, also,~\cite{km86}.\footnote{Later on Diagram 1 was extended~\cite{mur17} by combining it with Diagram 2 (Section~\ref{S:conclusion}) which includes the lattices of the extensions of logics $\mHC$ and $\kfourGrz$; these logics were defined in~\cite{esa06}.}

The logic $\KM$, being a modal system on the intuitionistic basis, is not only a conservative extension of $\Int$, which can be obtained, for instance, from the finite model property for $\KM$ (see~\cite{mur81}), but also satisfies a stronger property: For any modality-free formulas $A$ and $B$,
\[
\KM+A\vdash B\Longleftrightarrow\Int+A\vdash B. \tag{\textit{Kuznetsov's Theorem}}
\]

Kuznetsov's Theorem makes it possible to show that $ \lambda $ is a semilattice epimorphism, and the whole diagram is commutative; see~\cite{km86} for detail.
Diagram 1, as well as Diagram 2 below, and their combination in~\cite {mur17} demonstrate a new view on the interaction of lattices of extensions of known logics.

The last equivalence was established by A.~Kuznetsov and stated as \textit{Theorem} in~\cite{kuz85b}. Because of the lack of space, the Theorem was preceded by a short (half-page) outline of its proof. Several attempts to prove Kuznetsov's Theorem algebraically have been unsuccessful until recently~\cite{mur08}.\footnote{The proof of Proposition~\ref{P:main} below turned to be useful in section 5 of~\cite{mur17b}.} In the present paper we prove a property of deducibility in $\KM$ (Proposition~\ref{P:main}), which expresses a somewhat stronger idea than the one which can be read in the Kuznetsov's outline and presented below as Corollary~\ref{C:pre-kuznetsov}. Then, Kuznetsov's Theorem is obtained as an easy consequence (Corollary~\ref{T:kuznestov}).

The paper is structured as follows. First,  in Section 2, we give our main definitions and obtain the deducibilities which will be used in the sequel. In Section 3, we prove our main result about $\KM$-deducibility, Proposition~\ref{P:main}, and derive some intermediate corollaries. In Section 4,
we define the notion of $\KM$-sublogic and show, how Kuznestov's Theorem can be extended to these systems. In the last section, we discuss, how the results of the preceding section can be applied to the modalized Heyting calculus $\mHC$ and other $\KM$-sublogics.

\section{Main definitions and some deducibilities}\label{S:basic-definitions}
The (propositional) language $\Lan$ is determined by the denumerable set $\lbrace p_{0},p_{1},\ldots\rbrace$ of  (propositional) variables and by the (logical) connectives: $ \wedge $ (conjunction), $ \vee $ (disjunction), $ \rightarrow $ (implication), $\neg$ (negation), and $ \square $ (modality). As usual, the parentheses, ``$($'' and ``$)$'', are used as punctuation marks.
The formulas (or $\Lan$-\textit{formulas}) are defined in a usual way with a usual agreement on the usage of parentheses. We define
\[
\on  ::=p_{0}\ra p_{0}
\]
and, as usual,
\[
\al\leftrightarrow\be::=(\al\ra\be)\we(\be\ra\al).
\]
Metavariables for $\Lan$-formulas will be denoted by $\al$, $\be$, $\ga$ (possibly with subscripts) while the letters $A$, $B$, $C$ will be used as
 metavariables for \textit{assertoric}, that is $\square$-\textit{free}, formulas. Thus the $\square$-free fragment of $\Lan$ will be used explicitly as a language, though we do not give it a name.
 A formula of the form $\square\ga$ is called a $\square$-\textit{formula}. Given a nonempty list of formulas, say $S=\langle \al_1,\al_2,\ldots,\al_n\rangle$, a $\square$-formula $\ga$ is called a \textit{maximal subformula} of $S$ if $\ga$ is a subformula of at least one formula of $S$ and for each $\al_i$,  where  $\ga$ is a subformula, $\ga$ does not occur in the scope of $\square$. For instance, any $\square$-formula which is maximal in length among all $\square$-subformulas
of $S$ is maximal in the above sense. In other words, a maximal formula is a maximal element in the partially ordered set of all $\square$-subformulas of $S$ arranged by the relation `$x$ is a subformula of $y$'.
Thus if $\square\alpha$ is a maximal subformula of $S$, it is not a subformula
of any $\square$-subformula of  $S$, except itself.
The set of all maximal subformulas of $S$ is denoted by $M(S)$. We note that $M(S)\subseteq\cup_{1\le i\le n} M(\langle\al_i\rangle)$, but not necessarily vise versa. By the \textit{rank} of $ S $ we mean the cardinality of $ M(S) $.\footnote{This definition of rank differs from the definition of Kuznetsov. We need this to reach some generalization (Proposition~\ref{P:main}) of Kuznetsov's original conclusion (Corollary~\ref{C:pre-kuznetsov}) for future reference.} As usual, by a \textit{substitution} we mean an endomorphism on the formula algebra of $\Lan$-formulas.

Given formulas $\al$, $\be$ and $\ga$, we denote the result of replacement of all occurrences of $\be$ in $\al$ with $\ga$ by
\[
\al[\be:\ga].
\]

As should be expected, the calculi $\Int$ and $\KM$ will be the key figures in our discussion.
The former is formulated in the assertoric fragment of $\Lan$ by the axioms $(\text{Ax}_0)$ below and the two rules of inference --- (simultaneous) \textit{substitution} and \textit{modus ponens}; the latter in full $\Lan$
 by the axioms $(\text{Ax}_0)$--$(\text{Ax}_3)$ below and the same rules of inference.

We will be dealing with several types of derivation, depending on the language and axioms employed. This is the full list of the axioms we deal with:
\[
\begin{array}{cl}
(\text{Ax}_{0})
&\mbox{axioms of intuitionistic propositional calculus},\\
&\mbox{e.g., corresponding to the schemata listed in~\cite{kle52}, {\S} 19};\\
(\text{Ax}_{1}) &p_{0}\ra\square p_{0};\\
(\text{Ax}_{2}) &(\square p_{0}\ra p_{0})\ra p_{0};\\
(\text{Ax}_{3}) &\square p_{0}\ra(p_{1}\ve{(p_{1}\ra p_{0}))}.
\end{array}
\]

The axioms $(\text{Ax}_{i})$ along with the inference rules, substitution and modus ponens, determine the following three consequence relations based on a corresponding notion of deducibility. Before turning to definitions, we want to make the following remark about the substitution rule. If we allow the use of any $\Lan$-formula in application of the substitution rule, we get one consequence relation, while if we restrict substitution to
$\square$-free formulas only, we get a different consequence relation. In the following definitions of types of deducibility, understood as a binary relation~$\vdash$, a usual notion of derivation is employed. 

We use the terms:
\begin{itemize}
\item $\KM$-\textit{deducibility} for $\KM +\al\vdash\be$, where all four $(\text{Ax}_{0})$--$(\text{Ax}_{3})$ can be used and substitution is allowed for all $\Lan$-formulas; 
\item $\Int^{\square}$-\textit{deducibility} for $\Int^{\square} +\al\vdash\be$, where only $(\text{Ax}_{0})$ can be used and substitution is allowed with no restrictions; 
\item $\Int$-\textit{deducibility} for $\Int + A\vdash B$, where  only axioms $(\text{Ax}_{0})$ can be used and substitution is restricted to the $\square$-free formulas.
\end{itemize}
In the $\KM$-, $\Int^{\square}$-
and $\Int$-deducibilities above, $\al$ and $A$ are called a \textit{premise} and $\be$ and $B$, respectively, a \textit{conclusion} of a derivation which supports a corresponding deducibility. Deducibilities without a premise are allowed and denoted by $\KM\vdash \be$, $\Int^{\square}\vdash\be$ and
$\Int\vdash B$, respectively.

We employ the letter $D$ (with or without a subscript) to denote a derivation.
Focusing on a derivation $D$, in order to indicate that $D$ supports $\KM+\al\vdash \be$ we will write
$D:\KM+\al\vdash \be$. This notation applies to all types of deducibility that we use. 

We remind the reader that two propositional calculi $\textbf{C}_1$ and $\textbf{C}_2$, where at least one of them is formulated in a modal language and both share their assertoric language, are called \textit{assertorically equipollent} if for any assertoric formulas $A$ and $B$, the following equivalence holds:
\[
\textbf{C}_1+A\vdash B \Longleftrightarrow \textbf{C}_2 +A\vdash B;
\]
compare with~\cite{km86}. 

Next we introduce derivations with special characteristics.

\begin{defn}[refined derivation]
A derivation is called refined if all substitutions, if any, apply only to the axioms occurring in the derivation or to the premise, if the derivation has a premise.
\end{defn}

\begin{rem}\label{R:refined}
	The derivations of all deducibility types  defined above can be made refined. To prove this, we can apply the technique of~\cite{sob74, lam79}. In the sequel, when we begin with a derivation, we assume that this derivation is refined.
\end{rem}
\begin{defn}[pure derivation, relation $\Vdash$]
A refined {\KM}- or {$\Int^{\square}$}-derivation $D$, that is when {$D:\KM+\al\vdash\be$} $($or, respectively, {$D:\Int^{\square}+\al\vdash\be$}$)$, is called pure if $M(D)\subseteq M(\langle\al,\be\rangle)$. We will use the notation
{$D:\KM+\al\Vdash\be$} $($or {$D:\Int^{\square}+\al\Vdash\be$}, respectively$)$ to indicate that $D$ is pure in these deducibilities.
We write simply  {$\KM+\al\Vdash\be$} $($or {$\Int^{\square}+\al\Vdash\be$}$)$
if there is a derivation $D$ such that {$D:\KM+\al\Vdash\be$} $($or, respectively, {$D:\Int^{\square}+\al\Vdash\be$}$)$.\footnote{Our definition of refined derivation is slightly more general than that of Kuznetsov.}
\end{defn}

As one can see, a pure derivation requires restrictions on applications of the substitution rule. For instance, in case of $\KM+A\Vdash B$, any pure derivation supporting this claim does not contain the modality $\square$.

It is quite obvious that
\begin{equation*}\label{E:int-equivalent}
\Int^{\square}+A\Vdash B\Longleftrightarrow
\Int+A\vdash B,
\end{equation*}
since $M(\langle A,B\rangle)=\emptyset$. For the same reason,
\begin{equation*}\label{E:km-equivalent}
\KM + A\Vdash B\Longleftrightarrow
\Int^{\square}+A\Vdash B.
\end{equation*}
This yields immediately
\begin{equation}\label{E:km-int-equivalent}
\KM + A\Vdash B\Longleftrightarrow
\Int +A\vdash B.
\end{equation}

\begin{prop}\label{P:one}
	For any formula $\alpha$, $\emph{\Int}^{\square}\vdash \alpha$ if and only if
	there is a $($$\square$-free$)$ formula $A$ such that $ \emph{\Int}\vdash A $ and $\alpha$ can be obtained from $A$ by substitution.
\end{prop}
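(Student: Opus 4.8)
The plan is to prove the two implications separately, the right-to-left direction being routine and the left-to-right direction carrying the real content. For the direction ($\Leftarrow$), suppose $\Int\vdash A$ for some $\square$-free $A$ and that $\alpha$ is a substitution instance of $A$, say $\alpha=s(A)$. Every $\Int$-derivation of $A$ is at the same time an $\Int^{\square}$-derivation, since $\Int^{\square}$ is based on the same axioms $(\text{Ax}_0)$ and its substitution rule is less restrictive; hence $\Int^{\square}\vdash A$. Applying the unrestricted substitution rule of $\Int^{\square}$ with $s$ then yields $\Int^{\square}\vdash s(A)=\alpha$.

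For the direction ($\Rightarrow$), I would fix a derivation $D:\Int^{\square}\vdash\alpha$, which by Remark~\ref{R:refined} may be assumed refined; thus every application of the substitution rule in $D$ is made to an intuitionistic axiom, and all remaining steps are applications of modus ponens. The idea is to abstract away the modality by collapsing each outermost $\square$-subformula to a fresh variable. I would choose an injection $\square\gamma\mapsto q_{\square\gamma}$ from the (finitely many) $\square$-formulas occurring in $D$ to propositional variables not occurring in $D$, and define a map $\varphi\mapsto\varphi^{\flat}$ on $\Lan$-formulas by the recursion $p^{\flat}=p$ for a variable $p$, $(\neg\varphi)^{\flat}=\neg(\varphi^{\flat})$, $(\varphi\circ\psi)^{\flat}=\varphi^{\flat}\circ\psi^{\flat}$ for $\circ\in\{\we,\ve,\ra\}$, and $(\square\gamma)^{\flat}=q_{\square\gamma}$. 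By construction $\varphi^{\flat}$ is always $\square$-free, the map commutes with every connective other than $\square$, and on a $\square$-free schema $\chi$ one gets $(\sigma(\chi))^{\flat}=\sigma^{\flat}(\chi)$, where $\sigma^{\flat}(p):=(\sigma(p))^{\flat}$ is a $\square$-free substitution.

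Next I would replace every formula of $D$ by its image under $(\cdot)^{\flat}$. An axiom instance $\sigma(\chi)$ becomes the $\square$-free axiom instance $\sigma^{\flat}(\chi)$; a substitution step applied to an axiom remains a $\square$-free substitution step, since in a refined $D$ substitutions touch only axioms; and each modus ponens step $\beta,\beta\ra\delta\Rightarrow\delta$ becomes $\beta^{\flat},\beta^{\flat}\ra\delta^{\flat}\Rightarrow\delta^{\flat}$, because $(\cdot)^{\flat}$ commutes with $\ra$. Hence the transformed sequence is a genuine $\Int$-derivation whose last formula is $A:=\alpha^{\flat}$, giving $\Int\vdash A$ with $A$ being $\square$-free. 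Finally, I would let $t$ be the substitution with $t(q_{\square\gamma})=\square\gamma$ and $t(p)=p$ on all other variables; a straightforward induction on formula structure shows $t(\varphi^{\flat})=\varphi$ whenever $\varphi$ contains none of the fresh variables, and in particular $\alpha=t(\alpha^{\flat})=t(A)$. Thus $\alpha$ is a substitution instance of $A$, as required.

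The main obstacle I expect is ensuring that $(\cdot)^{\flat}$ interacts correctly with the inference rules, and this is precisely where the refined form of $D$ is essential: applying $(\cdot)^{\flat}$ to an unrestricted substitution of $\square$-formulas into a $\square$-containing formula need not produce a legitimate $\square$-free substitution instance. Restricting substitution to the axioms, which are $\square$-free, sidesteps this difficulty, and the remaining verifications that axiom instances map to axiom instances and that modus ponens is preserved are routine consequences of $(\cdot)^{\flat}$ being a homomorphism away from the modality $\square$.
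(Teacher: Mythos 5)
Your proof is correct, and it takes a genuinely different route from the paper's. The paper argues by induction on the length of a refined derivation: the axiom and substitution-from-axiom cases are immediate, and in the modus ponens case it applies the induction hypothesis to the major premise $\beta\ra\alpha$, observes that the resulting $\square$-free $\Int$-theorem must have the shape $B\ra A$ with $s(B)=\beta$ and $s(A)=\alpha$, and then detaches to conclude $\Int\vdash A$ and $\alpha=s(A)$. You instead lift the entire refined derivation in one pass: the map $(\cdot)^{\flat}$, which collapses every maximal $\square$-subformula to a fresh variable, sends axiom instances to $\square$-free axiom instances (this uses refinement, since substitutions then touch only the $\square$-free axioms) and commutes with modus ponens, so the image sequence is an $\Int$-derivation of $A=\alpha^{\flat}$, and the inverse substitution $t$ recovers $\alpha=t(A)$. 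What your global construction buys is uniformity, and this is not a cosmetic difference: in the paper's modus ponens step, detachment requires $\Int\vdash B$, yet the induction hypothesis applied to the minor premise $\beta$ only yields \emph{some} provable $B'$ with $s'(B')=\beta$ for a possibly different formula and substitution, and the identification of $B'$ with $B$ (under the same $s$) is passed over. In your version this coincidence is automatic, because one and the same flattening is applied to every line, so the lifted minor premise $\beta^{\flat}$ is literally the antecedent of the lifted major premise $\beta^{\flat}\ra\alpha^{\flat}$. In effect you prove the stronger statement that a single substitution simultaneously lifts all lines of the derivation, which is exactly the strengthened induction the paper's sketch implicitly relies on; it is also essentially the Soboci\'{n}ski--Lambros ``pull the substitution back to the axioms'' technique that the paper invokes again for Corollary~\ref{C:int-pure}, so your construction would serve there as well. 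The only costs are the bookkeeping with fresh variables and the injectivity of $\square\gamma\mapsto q_{\square\gamma}$, which you correctly require so that $t$ is well defined.
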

\begin{proof}
The if-implication is obvious.
	The proof  of the only-if-implication is conducted by induction on the length $n$ of a given derivation
	$\Int^{\square}\vdash\alpha$. Indeed, if $n=1$ then $\al$ is $\square$-free and we can take $A=\al$.
	
	Now assume that $\al$ is derived
	in $\Int^{\square}$ by a derivation of length $n>1$. By virtue of Remark~\ref{R:refined}, this derivation is assumed to be refined. Therefore, either $\alpha$ is obtained by substitution from an $(\text{Ax}_{0})$-axiom $A$ or by modus ponens from $\be$ and $\be\ra\alpha$. In the first case, we arrive at the desired conclusion automatically. In the second case, there are formulas $B$ and $B\ra A$
	and a substitution $s$ such that $\Int\vdash B$ and $\Int\vdash B\ra A$ and also $\be\ra\alpha=s(B\ra A)$. Hence $\Int\vdash A$ and $\alpha=s(A)$.
\end{proof}

In the sequel, we will also need the following.
\begin{cor}\label{C:int-pure}
If $\emph{\Int}^{\square}\vdash \alpha$ then $\emph{\Int}^{\square}
\Vdash\alpha$. 
\end{cor}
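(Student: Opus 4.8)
The plan is to reduce the statement to Proposition~\ref{P:one} and then push a substitution through an entire intuitionistic derivation. By Proposition~\ref{P:one}, the hypothesis $\Int^{\square}\vdash\alpha$ supplies a $\square$-free formula $A$ together with a substitution $s$ such that $\Int\vdash A$ and $\alpha=s(A)$. I would fix a refined $\Int$-derivation $D_0=\langle C_1,\ldots,C_m\rangle$ of $A=C_m$; since $\Int$ admits only $\square$-free substitutions, every $C_i$ is $\square$-free, so $M(D_0)=\emptyset$. The desired pure derivation of $\alpha$ is then obtained as a substitution instance of $D_0$.

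The key observation is that applying any substitution $\sigma$ to every line of a derivation again yields a derivation: a substitution instance of an $(\text{Ax}_{0})$-axiom remains such an instance (a composition of substitutions is a substitution), an application of modus ponens is preserved because $\sigma(\be\ra C_i)=\sigma(\be)\ra\sigma(C_i)$, and refinedness is retained since the new substitutions still act only on axioms. Applying $s$ directly, however, may introduce $\square$-formulas not controlled by $\alpha$: an intermediate line $C_i$ can contain a variable $q$ absent from $A$, and $s(q)$ might carry maximal $\square$-subformulas not occurring in $\alpha$. To remove this defect I would replace $s$ by the substitution $s'$ that agrees with $s$ on the variables occurring in $A$ and sends every other variable to the $\square$-free formula $\on$. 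Because $A$ uses only variables on which $s'$ and $s$ coincide, we still have $s'(A)=s(A)=\alpha$, and $D'$, defined as $s'(D_0)$, is a refined $\Int^{\square}$-derivation of $\alpha$.

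It remains to verify that $D'$ is pure, i.e.\ that $M(D')\subseteq M(\langle\alpha\rangle)$. The combinatorial heart of the argument is the following fact about $\square$-free skeletons: if $C$ is $\square$-free, then the substituted blocks $\sigma(p)$ occur pairwise non-nested inside $\sigma(C)$, so a $\square$-formula of $\sigma(C)$ is maximal exactly when it is maximal within the single block $\sigma(p)$ containing it; hence $M(\langle\sigma(C)\rangle)=\bigcup_{p}M(\langle\sigma(p)\rangle)$, the union ranging over the variables $p$ occurring in $C$. Applying this to $\alpha=s(A)$ shows $M(\langle\alpha\rangle)=\bigcup_{p}M(\langle s(p)\rangle)$ over the variables $p$ of $A$, and applying it to each line $s'(C_i)$ gives $M(\langle s'(C_i)\rangle)=\bigcup_{p}M(\langle s'(p)\rangle)$ over the variables of $C_i$. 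Since $s'(p)=s(p)$ when $p$ occurs in $A$ and $s'(p)=\on$ contributes nothing otherwise, each such set is contained in $M(\langle\alpha\rangle)$, whence $M(D')\subseteq M(\langle\alpha\rangle)$ and $\Int^{\square}\Vdash\alpha$.

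I expect the main obstacle to be the purity verification rather than the construction. One must argue carefully that passing to a $\square$-free skeleton makes the substituted blocks non-nested, so that maximality of a $\square$-subformula becomes a purely local property of the block in which it sits; this is what licenses the decomposition $M(\langle\sigma(C)\rangle)=\bigcup_{p}M(\langle\sigma(p)\rangle)$. The trimming of $s$ to $s'$ is the device that prevents the variables appearing only in intermediate lines from smuggling in unwanted maximal $\square$-formulas, while checking $s'(A)=\alpha$ and the preservation of derivationhood under substitution are the routine remaining points.
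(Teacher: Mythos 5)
Your construction is essentially the paper's own proof: both rest on Proposition~\ref{P:one} and then push the substitution $s$ through a refined $\Int$-derivation of $A$ (the paper invokes the technique of~\cite{sob74,lam79} for this step), and your trimming of $s$ to $s'$ is a genuine improvement in rigor --- the paper's terse claim that $M(D)=M(\langle\alpha\rangle)$ silently needs exactly this device to handle variables that occur in intermediate lines but not in $A$. However, the ``combinatorial heart'' of your purity verification is false as stated. Maximality in the paper's sense is a property of the poset of $\square$-subformulas ordered by the subformula relation (formulas, not occurrences), and two blocks can be nested \emph{as formulas} even though their occurrences in $\sigma(C)$ are pairwise non-nested. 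Take $C=p\wedge q$, $\sigma(p)=\square r$, $\sigma(q)=\square\square r$: then $M(\langle\sigma(C)\rangle)=\lbrace\square\square r\rbrace$, whereas $\bigcup_{p}M(\langle\sigma(p)\rangle)=\lbrace\square r,\square\square r\rbrace$. The inclusion your chain actually needs, namely $\bigcup_{p}M(\langle s(p)\rangle)\subseteq M(\langle\alpha\rangle)$ with $p$ ranging over the variables of $A$, is precisely the direction that fails here, since $\square r\notin M(\langle\alpha\rangle)$.

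The conclusion nevertheless survives, by a simpler argument that bypasses maximal formulas until the very end. Every line of $D'$ is $s'(C_i)$ with $C_i$ $\square$-free, so every $\square$-subformula of $D'$ lies inside some block $s'(p)$; each block is either $\on$ (which has no $\square$-subformulas) or $s(p)$ with $p$ occurring in $A$, hence a subformula of $\alpha=s(A)$. Thus every $\square$-subformula of $D'$ is a $\square$-subformula of $\alpha$; conversely, $\alpha$ is itself the last line of $D'$. So the sets of $\square$-subformulas of $D'$ and of $\langle\alpha\rangle$ coincide, the two posets are identical, and therefore $M(D')=M(\langle\alpha\rangle)$, which gives purity. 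In short: same route as the paper, with a worthwhile refinement ($s'$), but you should replace the block-decomposition identity for maximal subformulas by this comparison of the full $\square$-subformula sets.
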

\begin{proof}
Assume that $\Int^{\square}\vdash\alpha$.	According to Proposition~\ref{P:one}, $\Int\vdash A$, for some $A$, and $\alpha$ is obtained from $A$ by a substitution $s$, that is $s(A)=\al$. In view of Remark~\ref{R:refined}, the last deducibility can be made refined, that is $\Int\Vdash A$. Then, using the technique of~\cite{sob74} or that of~\cite{lam79}, the substitution $s$ can be ``pulled back'' to the involved axioms.
Let us denote the resulting derivation by $D$.
It should be clear that $M(D)=M(\langle\al\rangle)$. That is, ${\Int}^{\square}
\Vdash\alpha$.
\end{proof}

Now we consider some deducibilities which will be used in Section~\ref{S:Kuznetsov's-proof}.
We begin with the strong replacement property (Proposition~\ref{P:replacement}), which is a direct analogue of the replacement theorem for $\Int$-deducibilities, (see~\cite{kle52}, {\S} 26) and which is valid for $\Int^{\square}$.
\begin{prop}[the strong replacement property]\label{P:replacement}
Given formulas $A$, $B$ and $C$,
{\em
\[
\Int\vdash (A\leftrightarrow B)\rightarrow(C\leftrightarrow C[A:B]).
\]}
Analogously, given formulas $\al$, $\be$ and $\ga$, if $\al$ does not occur in any $\square$-subformula of $\ga$, then
\[
\emph{\Int}^{\square}\vdash(\al\leftrightarrow\be)\ra
(
\ga\leftrightarrow\ga[\al:\be]).\footnote{The strong replacement property without any conditions on $\al$ is true for $\KM$-deducibility; see~\cite{km86}.}
\]
\end{prop}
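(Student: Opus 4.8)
The plan is to prove both assertions by structural induction on the formula being rewritten — $C$ in the first (assertoric) display, $\gamma$ in the second — reducing each inductive step to the intuitionistic congruence schemata for the individual connectives. The assertoric half is literally the classical replacement theorem for $\Int$, so I would simply invoke its proof (see \cite{kle52}, \S\,26). The engine of that argument is the family of congruence lemmas
\[
\Int\vdash(\al\leftrightarrow\be)\ra((\al\we\delta)\leftrightarrow(\be\we\delta)),\qquad
\Int\vdash(\al\leftrightarrow\be)\ra((\delta\we\al)\leftrightarrow(\delta\we\be)),
\]
and their analogues for $\ve$ and $\ra$, together with $\Int\vdash(\al\leftrightarrow\be)\ra(\neg\al\leftrightarrow\neg\be)$. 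These are all intuitionistic theorems, and chaining them along the construction tree of $C$ yields the claim.

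For the modal half I would run the same induction on the structure of $\gamma$. First I would note that each congruence lemma above is a substitution instance of an $(\text{Ax}_{0})$-theorem; since $\Int^{\square}$-deducibility permits substitution with no restrictions and treats $\square$ as an opaque connective, every such lemma remains derivable in $\Int^{\square}$ even when $\delta$ or the arguments contain $\square$. Hence in the base case $\gamma=\al$ (where the goal $(\al\leftrightarrow\be)\ra(\al\leftrightarrow\be)$ is trivial) and in the binary and negation cases ($\gamma=\gamma_1\we\gamma_2$, etc.) the argument proceeds exactly as in the assertoric situation: the hypothesis ``$\al$ occurs in no $\square$-subformula of $\gamma$'' is inherited by the immediate subformulas $\gamma_1,\gamma_2$, so the induction hypothesis applies to each, and the appropriate connective congruence lemma closes the step from the single antecedent $(\al\leftrightarrow\be)$.

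The crux — and the only place the hypothesis does real work — is the modal case $\gamma=\square\gamma_1$ (with $\gamma\neq\al$). Here $\gamma$ is itself a $\square$-subformula of $\gamma$, so the assumption that $\al$ occurs in no $\square$-subformula of $\gamma$ forces $\al$ not to occur in $\gamma$ at all; consequently $\gamma[\al:\be]=\gamma$ and the goal $(\al\leftrightarrow\be)\ra(\gamma\leftrightarrow\gamma)$ is a triviality. Thus the induction never needs to transport an equivalence through $\square$, which is exactly what one wants, since $\Int^{\square}$ proves no modal congruence rule of the form $(\al\leftrightarrow\be)\ra(\square\al\leftrightarrow\square\be)$. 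The main conceptual point to make explicit is precisely this: the non-occurrence condition is calibrated so that the $\square$-case collapses; everything else is the routine intuitionistic replacement bookkeeping, which I would keep terse and defer to the Kleene citation.
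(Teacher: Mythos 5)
Your proof is correct and matches the approach the paper implicitly takes: the paper offers no proof of Proposition~\ref{P:replacement} at all, deferring the assertoric half to Kleene (\cite{kle52}, \S 26) and asserting the $\Int^{\square}$ half as a ``direct analogue,'' which is exactly the structural induction you carry out. Your explicit treatment of the case $\ga=\square\ga_{1}$ --- where the non-occurrence hypothesis forces $\ga[\al:\be]=\ga$, so the induction never has to transport an equivalence through $\square$ --- is precisely the point that justifies the paper's word ``analogue,'' and it is the one step the paper leaves entirely to the reader.
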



We proceed with the following two lemmas.
\begin{lem}\label{L:auxiliary}
Given formulas $A$ and $B$,
\[
\emph{\Int}\vdash
((A\ve(A\ra B))\ra B)\leftrightarrow B.
\]
\end{lem}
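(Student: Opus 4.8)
The plan is to prove the two implications constituting the biconditional separately and then combine them by means of the definition $\al\leftrightarrow\be::=(\al\ra\be)\we(\be\ra\al)$ together with the deduction theorem for $\Int$. Writing $H:=(A\ve(A\ra B))\ra B$, the right-to-left direction $B\ra H$ is immediate: it is an instance of the intuitionistic schema $B\ra(\ga\ra B)$, taking $\ga$ to be $A\ve(A\ra B)$.

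For the left-to-right direction $H\ra B$ I would assume $H$ and derive $B$. The guiding idea is to resist trying to prove the disjunction $A\ve(A\ra B)$ outright---upon substituting $\false$ for $B$ it degenerates to the law of excluded middle and is therefore not intuitionistically derivable---and instead to exploit $H$ twice. First, composing the disjunction-introduction theorem $A\ra(A\ve(A\ra B))$ with $H$ yields $A\ra B$. Second, composing the companion disjunction-introduction theorem $(A\ra B)\ra(A\ve(A\ra B))$ with $H$ yields $(A\ra B)\ra B$. A single application of modus ponens to these two conclusions then gives $B$, whence the deduction theorem delivers $\Int\vdash H\ra B$.

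Combining the two implications through the definition of $\leftrightarrow$ produces the stated equivalence. The one point that requires a moment's thought is the double use of $H$ in the second paragraph: the subformula $A\ra B$ plays a dual role, being both a consequence of the left disjunct $A$ and, as the right disjunct, itself sufficient (via $H$) to force $B$. Once this pivot is spotted, every individual step reduces to a routine intuitionistic propositional derivation, so I anticipate no real obstacle beyond that observation.
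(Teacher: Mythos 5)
Your proof is correct, but it takes a genuinely different route from the paper's. The paper never splits the biconditional into its two implications; instead it runs a chain of three intuitionistic equivalences,
\[
((A\ve(A\ra B))\ra B)\leftrightarrow((A\ra B)\we((A\ra B)\ra B))
\leftrightarrow((A\ra B)\we B)\leftrightarrow B,
\]
first distributing the implication over the disjunction in the antecedent, then simplifying $C\we(C\ra D)$ to $C\we D$, then absorbing $A\ra B$ into $B$, and finally appealing to transitivity of provable equivalence (Kleene, {\S} 26). You instead prove $B\ra H$ as an instance of $B\ra(\ga\ra B)$ and establish $H\ra B$ by the deduction theorem, composing the two disjunction-introduction theorems with $H$ to get $A\ra B$ and $(A\ra B)\ra B$, and closing with modus ponens. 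The logical pivot is the same in both arguments --- $A\ra B$ is simultaneously a consequence of the antecedent disjunction and, as a disjunct, enough (via $H$) to force $B$; the paper's first equivalence makes this visible as the conjunction $(A\ra B)\we((A\ra B)\ra B)$, while you exploit it through two compositions with $H$. What the paper's formulation buys is uniformity with what follows: Lemma~\ref{L:auxiliary-2} is proved by repeatedly rewriting subformulas via the strong replacement property (Proposition~\ref{P:replacement}), and for that purpose the result is most useful packaged exactly as a provable equivalence usable for replacement, which the equivalence chain delivers directly. Your argument is more elementary and self-contained --- it needs only disjunction introduction, syllogism, modus ponens, and the deduction theorem --- and your side remark that the disjunction $A\ve(A\ra B)$ itself degenerates to excluded middle under $B:=\false$, so one must use $H$ twice rather than prove the disjunction, is a correct and worthwhile observation.
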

\begin{proof}
Indeed, we successively obtain:
\[
\begin{array}{l}

\Int\vdash
((A\ve(A\ra B))\ra B)
\leftrightarrow
((A\ra B)\we((A\ra B)\ra B)),\\
\Int\vdash
((A\ra B)\we((A\ra B)\ra B))
\leftrightarrow
((A\ra B)\we B),\\
\Int\vdash
((A\ra B)\we B)\leftrightarrow
 B.
\end{array}
\]
It remains to apply the property:
\[
\left[\Int\vdash C\leftrightarrow D~\text{and}~
\Int\vdash D\leftrightarrow E\right]\Longrightarrow
\Int\vdash C\leftrightarrow D;
\]
cf.~\cite{kle52}, {\S} 26.
\end{proof}

\begin{lem}\label{L:auxiliary-2}
Given formulas $A_{1},\ldots,A_{n}$ and $B$,
\begin{equation}\label{E:auxiliary}
\emph{\Int}\vdash
(\wedge_{1\le i\le n}(A_{i}\ve(A_{i}\ra B))\ra B)
\ra B.
\end{equation}
\end{lem}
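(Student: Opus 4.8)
The plan is to argue by induction on $n$. Abbreviate $C_i:=A_i\ve(A_i\ra B)$ and $\Phi_k:=\WE_{1\le i\le k}C_i$, so that the goal becomes $\Int\vdash(\Phi_n\ra B)\ra B$.

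\emph{Base case} $n=1$. Here $\Phi_1=C_1=A_1\ve(A_1\ra B)$, so the claim reads $\Int\vdash((A_1\ve(A_1\ra B))\ra B)\ra B$. Setting $X:=(A_1\ve(A_1\ra B))\ra B$, Lemma~\ref{L:auxiliary} gives $\Int\vdash X\leftrightarrow B$, whence $\Int\vdash X\ra B$, which is exactly the required formula.

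\emph{Inductive step.} Assume $\Int\vdash(\Phi_{n-1}\ra B)\ra B$. Since $\Phi_n=\Phi_{n-1}\we C_n$, currying yields $\Int\vdash(\Phi_n\ra B)\leftrightarrow(\Phi_{n-1}\ra(C_n\ra B))$. I now argue under the assumption $\Phi_n\ra B$: by this equivalence we have $\Phi_{n-1}\ra(C_n\ra B)$, so under the further assumption $\Phi_{n-1}$ we obtain $C_n\ra B$; because $C_n=A_n\ve(A_n\ra B)$, the ``$\ra B$'' half of Lemma~\ref{L:auxiliary}, namely $\Int\vdash((A_n\ve(A_n\ra B))\ra B)\ra B$, delivers $B$. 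Discharging $\Phi_{n-1}$ gives $\Phi_{n-1}\ra B$, and combining this with the inductive hypothesis $(\Phi_{n-1}\ra B)\ra B$ produces $B$. Discharging the assumption $\Phi_n\ra B$ then yields $\Int\vdash(\Phi_n\ra B)\ra B$, completing the induction.

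The one point requiring care is the \emph{direction} of the currying in the inductive step: one must rewrite $\Phi_n\ra B$ as $\Phi_{n-1}\ra(C_n\ra B)$, keeping $B$ itself as the consequent of the innermost implication, since this is the only shape to which Lemma~\ref{L:auxiliary} applies (its statement matches the disjunct $A_n\ra B$ of $C_n$ against the same target $B$). The alternative rewriting $C_n\ra(\Phi_{n-1}\ra B)$ is useless, because its consequent $\Phi_{n-1}\ra B$ does not match that disjunct. Apart from this bookkeeping, every inference above is elementary $\Int$-deducibility (currying, weakening, transitivity of implication), so no genuine obstacle remains once Lemma~\ref{L:auxiliary} supplies the base case and the single-conjunct reduction.
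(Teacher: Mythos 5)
Your proof is correct and follows essentially the same route as the paper's: both peel off the conjuncts $A_i\ve(A_i\ra B)$ one at a time by currying and then invoking Lemma~\ref{L:auxiliary}. The only difference is bookkeeping --- the paper chains invertible equivalences using the strong replacement property (Proposition~\ref{P:replacement}), whereas you run an explicit induction on $n$ with deduction-theorem reasoning, needing only the left-to-right half of Lemma~\ref{L:auxiliary}.
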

\begin{proof}
We will reduce (\ref{E:auxiliary})
to a true statement by a number of invertible steps. On each step we use Proposition~\ref{P:replacement} (without mention) and sometimes Lemma~\ref{L:auxiliary}. Thus (\ref{E:auxiliary}) is equivalent to each of the following:
\[
\begin{array}{l}
\Int\vdash
(\wedge_{1\le i\le n-1}(A_{i}\ve(A_{i}\ra B))\ra ((A_{n}\ve(A_{n}\ra B))\ra B))
\ra B,\\
\Int\vdash
(\wedge_{1\le i\le n-1}(A_{i}\ve(A_{i}\ra B))\ra
 B)\ra B,~~~\![\mbox{Lemma~\ref{L:auxiliary}}]\\
\ldots\ldots\ldots\ldots\ldots\ldots\ldots\ldots
\ldots\ldots\ldots\ldots\ldots\ldots\ldots
~~
[\mbox{Lemma~\ref{L:auxiliary}}]\\
\Int\vdash((A_{1}\ve 
(A_{1}\ra B))\ra B)\ra B,~~~
[\mbox{Lemma~\ref{L:auxiliary}}]\\
\Int\vdash B\ra B.
\end{array}
\]
\end{proof}

\begin{defn}[rank of derivation, deducibility relation $\vdash_{m}$]\label{D:rank}
Given a refined derivation  $D:\KM+\alpha\vdash\beta$, the rank of $D$ is the cardinality of $M(D)$. If there is a refined  derivation $\KM+\alpha\vdash\beta$ of rank $m$, we write
$\KM+\alpha\vdash_{m}\beta$.
\end{defn}

We observe that 
$\KM+A\vdash_{0}B$ simply means that $\KM+A\Vdash B$.
Thus, in virtue of (\ref{E:km-int-equivalent}), we obtain:
\begin{equation}\label{E:km-int-2}
\KM+A\vdash_{0}B\Longleftrightarrow
\Int+A\vdash B.
\end{equation}

\section{Main results}\label{S:Kuznetsov's-proof}
In this section we prove our main result (Proposition~\ref{P:main}) and derive Kuznetsov's Theorem as its consequence (Corollary~\ref{T:kuznestov}). Also, we derive Kuznetsov's original key idea as Corollary~\ref{C:pre-kuznetsov}.

\begin{prop}\label{P:main}
	Let $D:\emph{\KM}+\al\vdash_{m}\be$ with $m>0$ and let $\square\ga\in M(D)$ so that $\square\ga$ is not a subformula of $\al$. Then there is a formula $\delta$ and a derivation $D_{1}: \emph{\KM}+\al\vdash_{l}\be[\square\ga:\delta] $ of rank $l<m$ and such that $\square\gamma\not\in M(D_{1})$ and, hence, $M(D_{1})\subset M(D)$.
\end{prop}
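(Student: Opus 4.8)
The plan is to transform $D$ by the uniform replacement $[\square\gamma:\delta]$ for a suitably chosen $\square\gamma$-free formula $\delta$, and then to repair the few lines this replacement invalidates. First I would determine which lines survive unharmed. Since $\square$ occurs neither in the schema $(\text{Ax}_{0})$ nor, by hypothesis, in the premise $\alpha$, every occurrence of the $\square$-formula $\square\gamma$ in an $(\text{Ax}_{0})$-instance or in a premise-instance $s(\alpha)$ must sit inside a substituted variable; hence $[\square\gamma:\delta]$ carries such a line to another instance of the same axiom or of $\alpha$. Because $\square\gamma$ is maximal in $D$ it never lies in the scope of a $\square$, so $[\square\gamma:\delta]$ commutes with $\ra$ and every application of modus ponens is preserved. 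The same maximality shows that the only lines that can be spoiled are the instances in which the displayed $\square$ of $(\text{Ax}_{1})$--$(\text{Ax}_{3})$ produces $\square\gamma$ itself, namely $\gamma\ra\square\gamma$, $(\square\gamma\ra\gamma)\ra\gamma$, and $\square\gamma\ra(\theta\ve(\theta\ra\gamma))$ for the finitely many parameters $\theta=s(p_{1})$ occurring in $(\text{Ax}_{3})$-instances of $D$.

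Next I would choose $\delta$. Writing $\theta_{1},\dots,\theta_{k}$ for those $(\text{Ax}_{3})$-parameters and $\theta_{j}^{\true}:=\theta_{j}[\square\gamma:\true]$, I set
\[
\delta:=\WE_{1\le j\le k}\big(\theta_{j}^{\true}\ve(\theta_{j}^{\true}\ra\gamma)\big),
\]
the empty conjunction $\true$ if no $(\text{Ax}_{3})$-instance occurs. Under this choice each spoiled axiom turns into a $\KM$-derivable formula. Indeed $\gamma\ra\delta$ is immediate from $\gamma\ra(\theta_{j}^{\true}\ra\gamma)$; the formula $(\delta\ra\gamma)\ra\gamma$ is exactly the instance of Lemma~\ref{L:auxiliary-2} (an $\Int$-theorem) with $A_{j}=\theta_{j}^{\true}$ and $B=\gamma$; and the $(\text{Ax}_{3})$-line becomes $\delta\ra(\theta_{j}^{\delta}\ve(\theta_{j}^{\delta}\ra\gamma))$ with $\theta_{j}^{\delta}:=\theta_{j}[\square\gamma:\delta]$, which follows from the $j$-th conjunct of $\delta$ once we know $\KM\vdash\delta\ra(\theta_{j}^{\delta}\leftrightarrow\theta_{j}^{\true})$. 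This last equivalence is supplied by the strong replacement property (Proposition~\ref{P:replacement}): the positions distinguishing $\theta_{j}^{\delta}$ from $\theta_{j}^{\true}$ are precisely the former maximal occurrences of $\square\gamma$, none of which lies in the scope of a $\square$, and under the hypothesis $\delta$ each such $\delta$ may be rewritten as $\true$.

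I would then assemble $D_{1}$ by applying $[\square\gamma:\delta]$ to every line of $D$ and splicing in, ahead of the repaired lines, the short $\square\gamma$-free derivations of the three formulas above; the conclusion becomes $\beta[\square\gamma:\delta]$ as required. It remains to control the rank. By construction $\delta$ and every spliced line are $\square\gamma$-free, so $\square\gamma\notin M(D_{1})$. The only $\square$-subformulas the replacement can freshly expose are maximal $\square$-subformulas of $\gamma$ and of the $\theta_{j}$ introduced by $\delta$; but $\delta$ is nontrivial only when an $(\text{Ax}_{3})$-instance occurs in $D$, and in such an instance $\gamma$ and each $\theta_{j}$ already occur outside the scope of $\square$, so their maximal $\square$-subformulas already lie in $M(D)$ (any $\gamma$-occurrence surviving unmodalized in $D_{1}$ was likewise unmodalized in $D$). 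Hence $M(D_{1})\subseteq M(D)\setminus\{\square\gamma\}$, and since $\square\gamma\in M(D)$ we obtain $M(D_{1})\subset M(D)$ and $l<m$.

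The main obstacle is exactly the self-reference concealed in the $(\text{Ax}_{3})$-case: when some $\theta_{j}$ itself contains $\square\gamma$, the literal replacement $\theta_{j}^{\delta}$ reinserts $\delta$ inside $\delta$, so the naive fixed point $\delta=\WE_{j}(\theta_{j}^{\delta}\ve(\theta_{j}^{\delta}\ra\gamma))$ is not a finite formula. Defining $\delta$ through the $\square\gamma$-free $\theta_{j}^{\true}$ and bridging back to $\theta_{j}^{\delta}$ by strong replacement is what severs this circle, and it is the step demanding care: one must check that the bridging sub-derivation can be taken refined (Remark~\ref{R:refined}), remains $\square\gamma$-free, and introduces no $\square$-subformula outside $M(D)$, so that the rank bookkeeping above is legitimate. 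Everything else is routine intuitionistic manipulation together with Lemmas~\ref{L:auxiliary} and~\ref{L:auxiliary-2}.
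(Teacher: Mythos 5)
Your proposal is correct and follows essentially the same route as the paper's own proof: the same choice of $\delta$ (with $\top$ in place of the paper's $\on$), the same identification of the only spoiled lines ($\gamma\ra\square\gamma$, $(\square\gamma\ra\gamma)\ra\gamma$, and the $(\text{Ax}_3)$-instances with antecedent $\square\gamma$), the same repairs via Lemma~\ref{L:auxiliary-2} and the strong replacement property (Proposition~\ref{P:replacement}) to bridge $\theta_j[\square\gamma:\top]$ back to $\theta_j[\square\gamma:\delta]$, and the same rank bookkeeping, with the paper packaging the ``no new maximal subformulas'' check into Corollary~\ref{C:int-pure} where you appeal to Remark~\ref{R:refined}. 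The one slip is your reading of the hypothesis: the proposition assumes only that $\square\gamma$ is not a subformula of $\alpha$, not that $\alpha$ is $\square$-free, so your justification that premise instances survive the replacement claims more than is given --- though the paper's own case (I) passes over exactly the same point without comment.
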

\begin{proof}
	Assume that
\begin{equation}\label{E:initial-gammas}
	D: \ga_1,\ga_2,\ldots,\ga_n.
\end{equation}
	Suppose all instances of
	$(\text{Ax}_{3})$ with $\square\ga$ as the antecedent used in $D$ are
\begin{equation}\label{E:axioms-three}
\square\ga\ra(\be_{1}\ve(\be_{1}\ra\ga)),\ldots,\square\ga\ra(\be_{k}\ve(\be_{k}\ra\ga)).
\end{equation}
Then we define
\begin{equation}\label{E:delta}
\delta::=
\begin{cases}
\begin{array}{cl}
\wedge_{1\le j\le k}(\be_{j}\ve(\be_{j}\ra\ga))[\square\ga:\on]
&\text{if (\ref{E:axioms-three}) is not empty}\\
\on &\text{if (\ref{E:axioms-three}) is  empty}.
\end{array}
\end{cases}
\end{equation}
Thus $\delta$ does not contain $\square\ga$. Then, we define
\[
\ga_{i}^{\ast}::=\ga_{i}[\square\ga:\delta], ~1\le i\le n,
\]
and consider the list
\begin{equation}\label{E:gammas-starred}
\ga_{1}^{\ast},\ga_{2}^{\ast},\ldots,\ga_{n}^{\ast}.
\end{equation}

Let us select any $\ga_i$ of (\ref{E:initial-gammas}) and examine the following cases. The goal of this examination is to show for each $\ga_{i}^{\ast}$  that either it is already a (refined) derivation
of type $\KM+\al\vdash_{l_{i}}\ga_{i}^{\ast}$ with $l_{i}<m$ or it can be extended to such a derivation. For each $i$, the resulting formula or sequence of formulas will be denoted below by $[\ga_{i}^{\ast}]$. As will be seen, $[\ga_{i}^{\ast}]$ does not contain $\square\ga$. Also, we will observe that $M([\ga_{i}^{\ast}])\subset M(D)$, for $\square\ga\notin M([\ga_{i}^{\ast}])$, and, therefore, the cardinality of each $M([\ga_{i}^{\ast}])$ is less than the cardinality of $M(D)$.
Then, concatenating  all derivations $[\ga_{i}^{\ast}]$, we get a derivation (denoted below by $D_1$) of $\ga_{n}^{\ast}=\be[\square\ga:\delta] $ of rank
$l$ which is less than $m$, since $l$ is the cardinality of $D_1$ and $M(D_1)\subset M(D)$.

Now we consider the forms in which each $\gamma_i$ may occur in $D$. We observe the following cases.

\begin{tabular}{cl}
(I) &Either $\ga_i$ is an instance of one of the axioms $(\text{Ax}_{0})$ or it is an instance of $\al$;\\
(II) &$\ga_i$ is an instance of $(\text{Ax}_{1})$ but is not $\ga\ra\square\ga$;\\
(II-$\ga$) &$\ga_i=\ga\ra\square\ga$;\\
(III) &$\ga_i$ is an instance of $(\text{Ax}_{2})$ but is not $(\square\ga\ra\ga)\ra\ga$;\\
(III-$\ga$) &$\ga_i=(\square\ga\ra\ga)\ra\ga$;\\
(IV) &$\ga_i$ is an instance of $(\text{Ax}_{3})$ but is not one of (\ref{E:axioms-three});\\
(IV-$\ga$) &$\ga_i$ is one of (\ref{E:axioms-three});\\
(V) &$\ga_i$ is obtained from $\ga_u$ and $\ga_v=\ga_{u}\ra\ga_{i}$ by modus ponens.\\ &(We remind the reader that the derivation \eqref{E:initial-gammas} is refined.)
\end{tabular}\\

In the cases (I), (II), (III) and (IV), $\ga_{i}^{\ast}$ has the form indicated in the corresponding case. Thus either $\ga_{i}^{\ast}$ is an instance of one of the axioms $(\text{Ax}_{0})$--$(\text{Ax}_{3})$ or is that of $\al$. We note that $\square\ga$ does not occur in $\ga_{i}^{\ast}$ and $\ga_{i}^{\ast}$ does not contain maximal formulas which would not be in $M(D)$.

In the case (II-$\ga$), $\ga_{i}^{\ast}=\ga\ra\delta$. If $\delta=\on$, then we have: $\Int\vdash p_{1}\ra\delta$ and hence $\Int^{\square}\vdash\ga\ra\delta$. Therefore, in virtue of Corollary~\ref{C:int-pure}, $\Int^{\square}\Vdash\gamma_{i}^{\ast}$. Next, if $\delta=\we_{1\le j\le k}(\be_{j}[\square\ga:\on]\ve(\be_{j}[\square\ga:\on]\ra\ga)$ (see~\eqref{E:delta}), we first notice that $\Int\vdash p_{0}\ra\we_{1\le j\le k}(p_{j}\ve(p_{j}\ra p_{0}))$ and hence, by virtue of Corollary~\ref{C:int-pure}, $\Int^{\square}\Vdash \ga\ra\delta$. Thus in both cases there is a derivation $\KM\vdash\ga_{i}^{\ast}$ of rank $l_{i}<m$, the set of maximal formulas of which is included in $M(D)$ but does not contain $\square\ga$. 

In the case (III-$\ga$), either $\ga_{i}^{\ast}=(\on\ra\ga)\ra\ga$ or 
$\ga_{i}^{\ast}=(\delta\ra\ga)\ra\ga$. If the former is the case, then we have:
$\Int\vdash(\on\ra p_{1})\ra p_{1}$ and hence (Corollary~\ref{C:int-pure}) $\Int^{\square}\Vdash \ga_{i}^{\ast}$.
Now, let the latter be the case. Since, by virtue of Lemma~\ref{L:auxiliary-2},
$\Int\vdash(\we_{1\le j\le k}(p_{j}\ve(p_{j}\ra p_{0}))\ra p_{0})\ra p_{0}$, we obtain, by Corollary~\ref{C:int-pure}, that
$\Int^{\square}\Vdash (\delta\ra\ga)\ra\ga$. Thus, as before, we conclude that there is a derivation $\KM\vdash\ga_{i}^{\ast}$ of rank $l_{i}<m$, the set of maximal formulas of which is included in $M(D)$ but does not contain $\square\ga$.

In the case (IV-$\ga$), 
$\ga_{i}^{\ast}=\delta\ra(\be_{j}[\square\ga:\delta]\ve(\be_{j}[\square\ga:\delta]\ra\ga))$, where $1\le j\le k$.
In view of the definition of $\delta$, (see~\eqref{E:delta}) $\Int^{\square}\vdash\delta\ra(\be_{j}[\square\ga:\on]\ve(\be_{j}[\square\ga:\on]\ra\ga))$. Also, since $\square\ga$ is maximal in $D$, $\on$ does not occur in any $\square$-subformula of $\be_{j}[\square\ga:\on]$; therefore, in virtue of Proposition~\ref{P:replacement},
\[
\Int^{\square}\vdash\delta\ra
((\be_{j}[\square\ga:\on]\ve(\be_{j}[\square\ga:\on]\ra\ga))\ra
(\be_{j}[\square\ga:\delta]\ve(\be_{j}[\square\ga:\delta]\ra\ga))).
\]
Therefore,
\[
\Int^{\square}\vdash\delta\ra
(\be_{j}[\square\ga:\delta]\ve(\be_{j}[\square\ga:\delta]\ra\ga)))
\]
and hence, according to Corollary~\ref{C:int-pure}, $\Int^{\square}\Vdash\ga_{i}^{\ast}$. And once again, we conclude that there is a derivation $\KM\vdash\ga_{i}^{\ast}$ of rank $l_{i}<m$, the set of maximal formulas of which is included in $M(D)$ but does not contain $\square\ga$.

Finally, in case (V), we observe that
$\ga_{i}^{\ast}$ can be obtained by modus ponens from $\ga_{u}^{\ast}$ and
$\ga_{v}^{\ast}$. By induction, we have: $\KM\vdash_{l_{u}}\gamma_{u}^{\ast}$ and 
$\KM\vdash_{l_{v}}\gamma_{v}^{\ast}$ with $\max\lbrace l_{u},l_{v}\rbrace<m$. Therefore, there is a derivation $\KM\vdash\gamma_{i}^{\ast}$ of rank less than $m$, which does not contain $\square\gamma$.

Now we define for each $i$, $1\le i\le n$, $[\ga_{i}^{\ast}]$ to be the $\KM$-derivation discussed in each case (I)--(V). As we have noted, each $M([\gamma_{i}^{\ast}])\subseteq M(D)$ and $\square\gamma\not\in M([\gamma_{i}^{\ast}])$.

Now we form
\[
D_{1}:[\ga_{1}^{\ast}],[\ga_{2}^{\ast}],\ldots,[\ga_{n}^{\ast}].
\]
Clearly, $D_{1}$ supports $\KM+\al\vdash
\be[\square\ga:\delta]$. Denoting its rank by $l$, we obtain that $l$ is less than or equal to the cardinality of 
$M([\gamma_{1}^{\ast}])\cup\ldots\cup M([\gamma_{1}^{\ast}])$ 
which is obviously less than $m$.
\end{proof}

We want to make the following observation. 
\begin{rem}\label{R:1} 
In the proof of Proposition~\ref{P:main}, in the construction of the derivations $[\gamma^{\ast}_{i}]$ only instances of $(\text{Ax}_{0})$ were employed.
\end{rem}

As an obvious consequence of Proposition~\ref{P:main}, we obtain the following.
\begin{cor}\label{C:pre-kuznetsov}
For any natural $m>0$, there is a nonnegative $l<m$ such that
\[
\emph{\KM}+A\vdash_{m}B\Longrightarrow
\emph{\KM}+A\vdash_{l}B.
\]	
\end{cor}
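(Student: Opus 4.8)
The plan is to derive Corollary~\ref{C:pre-kuznetsov} directly from Proposition~\ref{P:main} by a single application, using the special structure of the statement $\KM + A \vdash_m B$ where $A$ and $B$ are assertoric ($\square$-free) formulas. The key observation I would exploit is that when the premise $A$ and conclusion $B$ contain no modality, every maximal $\square$-subformula in a refined derivation witnessing $\KM + A \vdash_m B$ must fail to be a subformula of $A$ (indeed, $A$ has no $\square$-subformulas at all). This means the hypothesis of Proposition~\ref{P:main}, namely that $\square\ga \in M(D)$ is not a subformula of $\al$, is automatically satisfied for \emph{any} maximal $\square$-formula occurring in $D$.

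First I would take a refined derivation $D : \KM + A \vdash_m B$ with $m > 0$, so that $M(D)$ is nonempty. I would select any $\square\ga \in M(D)$; since $A$ is $\square$-free, $\square\ga$ is trivially not a subformula of $A$, so Proposition~\ref{P:main} applies. It yields a formula $\delta$ and a derivation $D_1 : \KM + A \vdash_l B[\square\ga : \delta]$ of rank $l < m$. The remaining point to check is that the conclusion of $D_1$ is still exactly $B$, not a modified formula $B[\square\ga:\delta]$. But this is immediate: since $B$ is $\square$-free, $\square\ga$ does not occur in $B$, and hence $B[\square\ga:\delta] = B$. Therefore $D_1 : \KM + A \vdash_l B$ with $l < m$, which is precisely the desired implication $\KM + A \vdash_m B \Longrightarrow \KM + A \vdash_l B$.

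The step requiring the most care — though it is hardly an obstacle — is verifying that both instances of the substitution $[\square\ga:\delta]$ are harmless on the assertoric endpoints: the premise $A$ is preserved because Proposition~\ref{P:main} keeps $\al$ fixed (it substitutes only inside the derived formulas and the conclusion), and the conclusion $B$ is preserved because it contains no occurrence of $\square\ga$. I expect no genuine difficulty here, as the whole force of the argument is carried by Proposition~\ref{P:main}; the corollary is, as the text says, an obvious consequence, and the only content is matching the $\square$-free hypothesis on $A$ and $B$ to the applicability condition and the invariance of the conclusion under the substitution.
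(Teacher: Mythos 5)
Your proposal is correct and is exactly the argument the paper intends: the paper offers no written proof, calling the corollary an ``obvious consequence'' of Proposition~\ref{P:main}, and the implicit reasoning is precisely yours --- since $A$ is $\square$-free the applicability condition on $\square\ga\in M(D)$ holds automatically, and since $B$ is $\square$-free the substitution is vacuous on the conclusion, so $B[\square\ga:\delta]=B$. Nothing further is needed.
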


In~\cite{kuz85b} the statement of Corollary~\ref{C:pre-kuznetsov} was incorporated in the text and was a key step in Kuznetsov's argument supporting the following conclusive statement.
\begin{cor}[cf.~\cite{kuz85b}, Theorem]\label{T:kuznestov}
	The calculi \emph{\KM} and \emph{\Int} are assertorically equipollent, that is
	\[
	\emph{\KM}+A\vdash B\Longleftrightarrow
	\emph{\Int}+A\vdash B.
	\]
\end{cor}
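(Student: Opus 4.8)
The plan is to prove Corollary~\ref{T:kuznestov} by establishing both implications of the biconditional, with essentially all the work concentrated in the forward direction. The backward implication, $\Int+A\vdash B\Longrightarrow\KM+A\vdash B$, is immediate: every $\Int$-derivation is already a $\KM$-derivation, since $\KM$ contains all of $(\text{Ax}_0)$ and uses the same rules of inference, and restricting substitution to $\square$-free formulas only loses derivational power, it never gains it. So I would dispose of this direction in one sentence.

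For the forward implication, suppose $\KM+A\vdash B$ with $A$ and $B$ both assertoric. By Remark~\ref{R:refined} I may assume the witnessing derivation is refined, so it has some well-defined rank $m$, i.e. $\KM+A\vdash_m B$ in the sense of Definition~\ref{D:rank}. The goal is to drive this rank down to $0$, because $\KM+A\vdash_0 B$ is exactly $\KM+A\Vdash B$, which by~\eqref{E:km-int-2} is equivalent to $\Int+A\vdash B$. The engine for lowering the rank is Corollary~\ref{C:pre-kuznetsov}: whenever $m>0$, there exists $l<m$ with $\KM+A\vdash_m B\Longrightarrow\KM+A\vdash_l B$. I would therefore argue by induction on $m$ (or equivalently by an appeal to the well-ordering of $\mathbb{N}$): if $m=0$ we are already done, and if $m>0$ then Corollary~\ref{C:pre-kuznetsov} furnishes a strictly smaller rank $l$, so after finitely many applications the rank reaches $0$.

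The one point that deserves care — and the step I would flag as the only real obstacle — is making sure that the rank-lowering process preserves the \emph{assertoric} character of the conclusion, so that~\eqref{E:km-int-2} is genuinely applicable at the end. In Corollary~\ref{C:pre-kuznetsov} the premise and conclusion are written $A$ and $B$, i.e. they are already $\square$-free, and this is not incidental: in Proposition~\ref{P:main} the new conclusion is $\be[\square\ga:\delta]$, and $\delta$ is constructed (see~\eqref{E:delta}) precisely so as not to contain $\square\ga$. When $B$ is $\square$-free to begin with, the replacement $[\square\ga:\delta]$ acts on a formula with no occurrence of $\square\ga$ at all, so $B[\square\ga:\delta]=B$ and the conclusion is unchanged; meanwhile the hypothesis of Proposition~\ref{P:main} that $\square\ga$ is not a subformula of the premise holds automatically since the premise $A$ is assertoric. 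Thus each rank-reduction step keeps the premise as $A$ and the conclusion as $B$, and Corollary~\ref{C:pre-kuznetsov} applies repeatedly to the \emph{same} pair $A,B$. I would verify this stability explicitly before invoking the descent.

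Putting the pieces together: from $\KM+A\vdash B$ we obtain $\KM+A\vdash_m B$ for some $m$; repeated application of Corollary~\ref{C:pre-kuznetsov} yields $\KM+A\vdash_0 B$; and then~\eqref{E:km-int-2} delivers $\Int+A\vdash B$. Combined with the trivial converse, this gives the asserted equipollence. The proof is short because Proposition~\ref{P:main} has already done the heavy lifting of eliminating one maximal $\square$-subformula at a time; the corollary-level argument is essentially just organizing that single-step elimination into a terminating recursion and checking that the assertoric endpoints survive it.
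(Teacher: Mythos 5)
Your proposal is correct and follows essentially the same route as the paper: the converse is dismissed as obvious, and the forward direction is obtained by repeated application of Corollary~\ref{C:pre-kuznetsov} to drive the rank down to $0$, followed by~\eqref{E:km-int-2}. Your extra verification that the assertoric endpoints $A,B$ survive each reduction step is a sound observation, though it is already built into the statement of Corollary~\ref{C:pre-kuznetsov} (whose premise and conclusion are the same $\square$-free pair on both sides), so the paper leaves it implicit.
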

\begin{proof}
	The $\Longleftarrow$ implication is obvious. To prove the $\Longrightarrow$ implication, assume that $\KM+A\vdash B$. Then, we  apply Corollary~\ref{C:pre-kuznetsov}, if necessary more than one time, to obtain $\KM+A\vdash_{0} B$. Then, we apply (\ref{E:km-int-2}).
\end{proof}

\section{$\KM$-Sublogics}\label{S:km-subsystems}

Introduced  as an intuitionistic counterpart of the provability logic $\GL$, (see~\cite{km86,mur14a}) logic $\KM$ has attracted attention of those researchers in the field who have been interested in modal systems on an intuitionistic basis, having ``provability smack.''  Among such systems we can mention $\textbf{K}^{\text{i}}$, $\textbf{K4}^{\text{i}}$, $\textbf{R}^{\text{i}}$, $\textbf{CB}^{\text{i}}$  and   $\textbf{SL}^{\text{i}}$ of~\cite{lit14}, and especially \textbf{mHC} of~\cite{esa06}, a close relative of $\KM$. These logics fall in the following group.

\begin{defn}[\KM-sublogic]\label{D:km-sublogic}
A set {\textbf{S}} of \Lan-formulas is called a {\KM}-sublogic if
\begin{center}
\begin{tabular}{cl}
$(i)$  &${\Int}^{\square}\vdash\alpha$ implies $\alpha\in{\textbf{S}}$;\\
$(ii)$ &$\alpha\in{\textbf{S}}$ implies ${\KM}\vdash\alpha$;\\
$(iii)$ &{\textbf{S}} is closed under substitution and modus ponens.
\end{tabular}
\end{center}

Deducibility  $\textbf{S}+\Gamma\vdash\alpha$ is understood in the sense that all the formulas valid in  \textbf{S}, as well as the formulas of $\Gamma$, can be used as axioms, and the rules of inference are substitution and modus ponens.
\end{defn}

The following observation follows quite obviously from Corollary~\ref{T:kuznestov}.
\begin{prop}\label{P:km-sublogic-equipolence}
Let \emph{\textbf{S}} be a \emph{\KM}-sublogic. Then
for any set $\Gamma\cup\lbrace A\rbrace$ of $\square$-free formulas,
the following conditions are equivalent$:$
\[
\begin{array}{cl}
(\text{\emph{a}}) &\emph{\Int}+\Gamma\vdash  A;\\
(\text{\emph{b}}) &\emph{\textbf{S}}+\Gamma\vdash A;\\
(\text{\emph{c}}) &\emph{\KM}+\Gamma\vdash A.
\end{array}
\]
In particular, \emph{\textbf{S}} and \emph{\Int} are assertorically equipollent.
\end{prop}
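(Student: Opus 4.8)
The plan is to prove the cycle of implications (a)$\Rightarrow$(b)$\Rightarrow$(c)$\Rightarrow$(a). The first two implications will use only the ``sandwich'' conditions $(i)$ and $(ii)$ of Definition~\ref{D:km-sublogic} together with the closure condition $(iii)$, and they require no appeal to Kuznetsov's Theorem; the whole weight of the argument sits in the closing implication (c)$\Rightarrow$(a), which is where Corollary~\ref{T:kuznestov} enters. The final clause about equipollence will then be read off by specializing $\Gamma$ to a singleton.

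For (a)$\Rightarrow$(b), I would transcribe a given $\Int$-derivation of $A$ from $\Gamma$ into a derivation in $\textbf{S}+\Gamma$: every instance of $(\text{Ax}_0)$ occurring in it is an $\Int$-theorem, hence an $\emph{\Int}^{\square}$-theorem, hence a member of $\textbf{S}$ by $(i)$; the premises lie in $\Gamma$; and by $(iii)$ the system $\textbf{S}+\Gamma$ is closed under substitution and modus ponens, the only rules used. Dually, for (b)$\Rightarrow$(c), each formula of $\textbf{S}$ employed as an axiom is a $\KM$-theorem by $(ii)$, the premises again lie in $\Gamma$, and $\KM+\Gamma$ is closed under the same two rules; so every theorem of $\textbf{S}+\Gamma$ is a theorem of $\KM+\Gamma$.

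The substance is (c)$\Rightarrow$(a). Since any derivation is finite, $\KM+\Gamma\vdash A$ already holds for some finite $\Gamma_0=\{A_1,\ldots,A_k\}\subseteq\Gamma$, and the task is to collapse $\Gamma_0$ to a single premise so that the single-premise statement of Corollary~\ref{T:kuznestov} becomes applicable. One cannot use the conjunction $A_1\wedge\cdots\wedge A_k$ together with a deduction theorem here, because the premises function as axiom schemas (substitution may be applied to them), and for such a schematic consequence the deduction theorem fails. I would therefore first rename variables so that the $A_i$ acquire pairwise disjoint sets of variables, obtaining $A_i'$; since each renaming is an invertible $\square$-free substitution, $\KM+\Gamma_0$ and $\KM+\{A_1',\ldots,A_k'\}$ have the same theorems. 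Setting $A':=A_1'\wedge\cdots\wedge A_k'$, the disjointness of variables guarantees that every substitution instance of $A'$ splits, by intuitionistic $\wedge$-elimination, into independent substitution instances of the $A_i'$, while conversely $A'$ and all its instances are recovered from the $A_i'$ by $\wedge$-introduction; hence $\KM+A'$ and $\KM+\Gamma_0$ prove the same formulas, and for the same reason $\Int+A'$ and $\Int+\Gamma_0$ prove the same formulas, the $\wedge$-rules and the renamings being $\square$-free and thus available to $\Int$. Now $A'$ and $A$ are $\square$-free, so from $\KM+A'\vdash A$ Corollary~\ref{T:kuznestov} yields $\Int+A'\vdash A$, and unwinding the reduction gives $\Int+\Gamma_0\vdash A$, whence $\Int+\Gamma\vdash A$.

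The main obstacle is precisely this premise-collapsing step: because substitution is permitted on the premises, $\Gamma_0$ behaves as a family of axiom schemas, the naive conjunction/deduction-theorem route is blocked, and the disjoint-variable device is what makes the single-premise form of Corollary~\ref{T:kuznestov} usable. Once (a)$\Leftrightarrow$(b)$\Leftrightarrow$(c) is in hand, the final assertion is immediate: taking $\Gamma=\{A\}$ and an arbitrary $\square$-free conclusion $B$, the equivalence (a)$\Leftrightarrow$(b) reads $\Int+A\vdash B\Leftrightarrow\textbf{S}+A\vdash B$, which is exactly the assertoric equipollence of $\textbf{S}$ and $\Int$.
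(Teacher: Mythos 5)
Your proof is correct and takes the route the paper intends: the paper offers no detailed argument, saying only that the proposition ``follows quite obviously from Corollary~\ref{T:kuznestov}'', and your cycle (a)$\Rightarrow$(b)$\Rightarrow$(c)$\Rightarrow$(a), with conditions $(i)$--$(iii)$ of Definition~\ref{D:km-sublogic} handling the first two implications and Kuznetsov's Theorem carrying (c)$\Rightarrow$(a), is exactly that argument spelled out. One minor remark: the disjoint-variable renaming is unnecessary---since the substitution rule applies to any formula already derived, one can substitute directly into the plain conjunction $A_1\wedge\cdots\wedge A_k$ and extract the needed instance of any single conjunct by $\wedge$-elimination, so the schematic premises $\Gamma_0$ and the single premise $A_1\wedge\cdots\wedge A_k$ already yield the same theorems.
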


We recall (cf.~\cite{esa06}) that
\[
\mHC=\Int^{\square}+\square(p_{0}\rightarrow p_{1})\rightarrow
(\square p_{0}\rightarrow\square p_{1})+ (\text{Ax}_{1})
+ (\text{Ax}_{3}).
\]
Since
\[
\KM\vdash \square(p_{0}\rightarrow p_{1})\rightarrow
(\square p_{0}\rightarrow\square p_{1}),
\]
(cf.~\cite{km86}, p. 88) $\mHC$ is a \KM-sublogic. Then, in virtue of Proposition~\ref{P:km-sublogic-equipolence}, we straightforwardly obtain the following.
\begin{cor}\label{C:mHC-equipolence}
For any set $\Gamma\cup\lbrace A\rbrace$ of $\square$-free formulas,
\[
	\emph{\mHC}+\Gamma\vdash A\Longleftrightarrow
	\emph{\Int}+\Gamma\vdash A.
\]
\end{cor}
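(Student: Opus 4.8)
The plan is to read Corollary~\ref{C:mHC-equipolence} as nothing more than the instance $\textbf{S}=\mHC$ of Proposition~\ref{P:km-sublogic-equipolence}. That proposition already delivers the equivalence of $\Int+\Gamma\vdash A$, $\textbf{S}+\Gamma\vdash A$ and $\KM+\Gamma\vdash A$ for \emph{every} $\KM$-sublogic $\textbf{S}$, so the entire task reduces to certifying that $\mHC$ is a $\KM$-sublogic in the sense of Definition~\ref{D:km-sublogic}. Once that is done, clauses (a) and (b) of Proposition~\ref{P:km-sublogic-equipolence}, specialized to $\textbf{S}=\mHC$, are precisely the two sides of the desired equivalence; clause (c) is not needed.

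To discharge the three conditions of Definition~\ref{D:km-sublogic} I would argue as follows. Condition $(iii)$ is immediate: $\mHC$ is presented as a calculus whose only rules of inference are substitution and modus ponens, so it is closed under both. Condition $(i)$ holds because $\mHC$ arises from $\Int^{\square}$ by adjoining further axioms while retaining the same rules; hence every $\Int^{\square}$-theorem is already an $\mHC$-theorem, i.e. $\Int^{\square}\vdash\alpha$ implies $\alpha\in\mHC$.

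The only condition carrying content is $(ii)$, that every $\mHC$-theorem is a $\KM$-theorem. Since $\mHC$ is generated from its axioms by substitution and modus ponens, and $\KM$ is itself closed under both rules, it suffices to check that each generating axiom of $\mHC$ belongs to $\KM$. The underlying $\Int^{\square}$-axioms are the $(\text{Ax}_{0})$-axioms, hence $\KM$-theorems; the axioms $(\text{Ax}_{1})$ and $(\text{Ax}_{3})$ are $\KM$-axioms outright; and the normality axiom $\square(p_{0}\ra p_{1})\ra(\square p_{0}\ra\square p_{1})$ is a $\KM$-theorem, which I would take from the cited derivation (\cite{km86}, p.~88). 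This last point is the sole step that is not purely bookkeeping, and it is the expected main obstacle — although, given the citation, it is effectively already discharged.

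With $(i)$--$(iii)$ established, $\mHC$ is a $\KM$-sublogic, and applying Proposition~\ref{P:km-sublogic-equipolence} with $\textbf{S}=\mHC$ closes the argument. Thus the proof is short by design: all the substantive work has been front-loaded into Corollary~\ref{T:kuznestov} and Proposition~\ref{P:km-sublogic-equipolence}, and the present corollary is obtained by verifying membership of $\mHC$'s axioms in $\KM$ and then quoting the general result.
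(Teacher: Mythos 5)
Your proposal is correct and follows exactly the paper's own route: the paper likewise observes that $\mHC=\Int^{\square}$ plus the normality axiom, $(\text{Ax}_{1})$ and $(\text{Ax}_{3})$, cites \cite{km86}, p.~88 for $\KM\vdash\square(p_{0}\ra p_{1})\ra(\square p_{0}\ra\square p_{1})$ to conclude that $\mHC$ is a $\KM$-sublogic, and then invokes Proposition~\ref{P:km-sublogic-equipolence}. Your write-up merely makes the verification of Definition~\ref{D:km-sublogic}'s three clauses more explicit, which is a fair elaboration rather than a different argument.
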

(Corollary~\ref{C:mHC-equipolence} answers in the affirmative a question in ~\cite{mur14a},  Problem 1.)

Now we turn to an algebraic interpretation of Proposition~\ref{P:km-sublogic-equipolence}.
\begin{defn}[\km-\textbf{S}-algebra, \mHC-algebra, {\KM}-algebra,
	\km-\textbf{S}-enrichable algebra, \mHC-enrichable algebra, {\KM}-enrichable algebra]
	\label{D:enrichable-algebras}
	Let \textbf{S} be a {\KM}-sublogic. An algebra $\fA=(\text{A},\wedge,\vee,\ra,\neg,\square)$, where
	$(\text{A},\wedge,\vee,\ra,\neg)$ is a Heyting algebra, is called a {\km-\textbf{S}}-algebra if for any formula $\alpha$,
	$\alpha\in{\textbf{S}}$ implies that
	$\fA$ validates $\alpha$. In particular, a {\km-\textbf{S}}-algebra is an {\mHC}-algebra if the unary operation $\square$ satisfies the following identities:
	\[
	\begin{array}{cl}
	(\emph{i}) &\square(x\wedge y)=\square x\wedge\square y,\\
	(\emph{ii}) &x\le\square x,\\
	(\emph{iii}) &\square x\le y\vee(y\ra x);
	\end{array}
	\]
	in addition, if the identity
	\[
	\begin{array}{cl}
	\!\!\!\!(\emph{iv}) &(\square x\ra x)\ra x=x
	\end{array}
	\]
	holds, the algebra is called a  {\KM}-algebra.
	An Heyting algebra {\fA} is
	{\km-\textbf{S}}-enrichable if a unary operation
	$\square$ can be defined in {\fA} in such a way that the resultant expansion $(\fA, \square)$ is a {\km-\textbf{S}}-algebra.
	In particular, {\fA} is
	 {\mHC}-enrichable if a unary operation $\square$ satisfies the conditions $(\emph{i}) - (\emph{iii})$; and is  {\KM}-enrichable if $(\emph{i})-(\emph{iv})$ are satisfied.\footnote{In~\cite{esa06} {\mHC}-algebras are called frontal Heyting algebras. In~\cite{kuz85b,km86} $\KM$-enrichable algebras are called $\Delta$-enrichable and in~\cite{mur08,mur14a} enrichable.}
\end{defn}

We easily observe the following.
\begin{lem}\label{L:km-enrichable-is-kms-enrichable}
Any \emph{\KM}-enrichable algebra is also \emph{\km-\textbf{S}}-enrichable.
\end{lem}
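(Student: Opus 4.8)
The plan is to prove the sharper statement that, for any \KM-sublogic \textbf{S}, every \KM-algebra is already a \km-\textbf{S}-algebra; the enrichability claim then follows at once, since the operation $\square$ witnessing \KM-enrichability can be reused verbatim to witness \km-\textbf{S}-enrichability.

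The one substantive ingredient is the soundness half of the algebraic semantics of \KM: every \KM-algebra validates every theorem of \KM. To see this, observe that the defining conditions of a \KM-algebra are the algebraic transcriptions of the axioms of \KM---$(\text{Ax}_{0})$ holds because the reduct is a Heyting algebra, while the conditions imposed on $\square$ are arranged precisely so that $(\text{Ax}_{1})$, $(\text{Ax}_{2})$, and $(\text{Ax}_{3})$ each evaluate to $\on$ under every assignment. Since the set of formulas taking the value $\on$ under every assignment is closed under substitution and modus ponens, it contains every $\alpha$ with $\KM\vdash\alpha$. This is the standard Lindenbaum--Tarski fact underlying~\cite{kuz85b,km86}.

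Next I would invoke clause $(ii)$ of Definition~\ref{D:km-sublogic}, which states precisely that $\alpha\in\textbf{S}$ implies $\KM\vdash\alpha$; hence $\textbf{S}\subseteq\{\alpha\mid\KM\vdash\alpha\}$. Combining this inclusion with the soundness fact, any \KM-algebra $(\fA,\square)$ validates every formula of \textbf{S}, i.e.\ $(\fA,\square)$ is a \km-\textbf{S}-algebra. Therefore, if {\fA} is \KM-enrichable and $\square$ is a unary operation making $(\fA,\square)$ a \KM-algebra, then the very same $\square$ makes $(\fA,\square)$ a \km-\textbf{S}-algebra, so {\fA} is \km-\textbf{S}-enrichable. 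The argument is thus little more than an unwinding of definitions, and I do not expect any real obstacle: the sole non-formal point is the soundness statement, which rests only on the fact that the defining conditions on $\square$ are the algebraic counterparts of the modal axioms $(\text{Ax}_{1})$--$(\text{Ax}_{3})$. All the content of the lemma is concentrated in the inclusion $\textbf{S}\subseteq\{\alpha\mid\KM\vdash\alpha\}$ built into the very notion of a \KM-sublogic.
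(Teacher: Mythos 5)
Your proof is correct and takes essentially the same route as the paper: fix the operation $\square$ witnessing \KM-enrichability, observe that the resulting expansion validates every theorem of \KM\ (soundness), and conclude via clause $(ii)$ of Definition~\ref{D:km-sublogic} that it validates \textbf{S}, hence is a \km-\textbf{S}-algebra and the same $\square$ witnesses \km-\textbf{S}-enrichability. The only difference is that you make the soundness step explicit, whereas the paper's proof invokes it implicitly in asserting that the expansion ``validates \KM.''
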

\begin{proof}
Let us fix a {\KM}-sublogic \textbf{S}. Assume that a Heyting algebra $\fA$ is {\KM}-enrichable. Then, according to Definition~\ref{D:enrichable-algebras}, there is an expansion $(\fA,\square)$ which validates $\KM$. In virtue of Definition~\ref{D:km-sublogic}-$(ii)$, $(\fA,\square)$ validates \textbf{S} as well.
\end{proof}

\begin{cor}\label{C:kms-enrichable-generate}
Let \emph{\textbf{S}} be a \emph{\KM}-sublogic. Any variety of Heyting algebras is generated by its \emph{\km-\textbf{S}}-enrichable algebras.
\end{cor}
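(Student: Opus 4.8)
The plan is to verify the equality $V=W$, where $W$ is the variety generated by the class $\mathcal{K}$ of \km-\textbf{S}-enrichable members of $V$. One inclusion is free: each member of $\mathcal{K}$ lies in $V$, and $V$, being a variety, is closed under homomorphic images, subalgebras and products, so $W\subseteq V$. For the converse, since $V$ and $W$ are both varieties of Heyting algebras, it suffices by Birkhoff's theorem to show that they satisfy exactly the same identities; and because every Heyting identity $t=s$ is equivalent to the validity of the square-free formula asserting $(t\leftrightarrow s)=\on$, this reduces to a single claim: every square-free formula valid throughout $\mathcal{K}$ is already valid throughout $V$. (Validity of an identity is preserved under homomorphic images, subalgebras and products, so a square-free formula is valid in $W$ exactly when it is valid in every member of $\mathcal{K}$.)

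I would establish this claim contrapositively. Suppose a square-free formula $A$ fails in some member of $V$, and let $L$ be the intermediate logic of $V$, i.e.\ the set of all square-free formulas valid in $V$. Then $L$ contains \Int, is closed under substitution and modus ponens, and by construction $\Int+L\vdash A$ iff $A\in L$ iff $V$ validates $A$; since $A$ fails in $V$ we obtain $\Int+L\not\vdash A$. Applying the equipollence of Proposition~\ref{P:km-sublogic-equipolence}, namely the equivalence of its conditions (a) and (c) with $\Gamma:=L$, we conclude $\KM+L\not\vdash A$.

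Next I would pass to the Lindenbaum algebra $\mathfrak{L}$ of the theory $\KM+L$. Because \KM\ enjoys the strong replacement property for $\square$ (the footnote to Proposition~\ref{P:replacement}), the operation $\square$ descends to a well-defined operation on $\mathfrak{L}$, making $\mathfrak{L}$ a \KM-algebra that validates every theorem of $\KM+L$ and refutes every non-theorem. In particular the canonical valuation $p_i\mapsto[p_i]$ sends $A$ to a non-top element, so $A$ fails in the Heyting reduct $\fB$ of $\mathfrak{L}$. Since the formulas of $L$ occur among the axioms of $\KM+L$ and substitution is unrestricted in \KM-deducibility, every substitution instance of a formula of $L$ --- including instances that introduce $\square$ --- is again a theorem of $\KM+L$; hence $\fB$ validates each square-free formula of $L$ under all valuations, that is, $\fB\in V$. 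Finally, $\fB$ carries the $\square$ inherited from the \KM-algebra $\mathfrak{L}$, so $\fB$ is \KM-enrichable, and by Lemma~\ref{L:km-enrichable-is-kms-enrichable} it is \km-\textbf{S}-enrichable. Thus $\fB$ is a member of $\mathcal{K}$ refuting $A$, which proves the claim and hence $V=W$.

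The genuinely delicate step is the verification that the Heyting reduct $\fB$ lands back inside $V$: one must check that validity in $\mathfrak{L}$ of a square-free member of $L$ survives passage to arbitrary valuations, including those substituting $\square$-formulas for the variables, and this is exactly where the unrestricted substitution rule built into \KM-deducibility and the schematic reading of the premise set in $\KM+\Gamma\vdash$ are used in an essential way. The remaining ingredients --- the Birkhoff reduction to identities, the translation between Heyting identities and square-free formulas, and the Lindenbaum construction (whose respect for $\square$ rests on \KM's replacement property) --- are routine. I would also stress that the full force of Kuznetsov's Theorem enters only through Proposition~\ref{P:km-sublogic-equipolence}, which already packages the set-premise form of the equipollence that is needed when $\Gamma$ is taken to be the whole logic $L$.
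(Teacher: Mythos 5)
Your proof is correct, and its skeleton coincides with the paper's: reduce generation of the variety $V$ to the claim that every $\square$-free formula refuted somewhere in $V$ is refuted by a \km-\textbf{S}-enrichable member of $V$, transfer the non-deducibility $\Int+\Gamma\not\vdash A$ across the equipollence of Proposition~\ref{P:km-sublogic-equipolence}, and read off the witness algebra from an algebraic completeness argument. The difference is in which leg of the equipollence you use and where the witness comes from. The paper takes $\Gamma$ to be a defining set of identities, passes from (a) to (b), i.e.\ $\textbf{S}+\Gamma\not\vdash A$, and simply asserts that some \km-\textbf{S}-algebra $(\fA,\square)$ separates $\Gamma$ from $A$ (treating the trivial variety as a separate case). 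You instead take $\Gamma$ to be the whole intermediate logic $L$ of $V$, pass from (a) to (c), i.e.\ $\KM+L\not\vdash A$, build the Lindenbaum algebra of $\KM+L$ --- legitimate because $\KM$ enjoys the strong replacement property, so $\square$ respects provable equivalence --- and then downgrade via Lemma~\ref{L:km-enrichable-is-kms-enrichable}. Your detour through $\KM$ is not merely cosmetic: the paper's separating-algebra step implicitly requires completeness of $\textbf{S}+\Gamma$-deducibility with respect to \km-\textbf{S}-algebras, and for an arbitrary $\KM$-sublogic \textbf{S} the Lindenbaum construction is delicate, since nothing in Definition~\ref{D:km-sublogic} guarantees that $\square$ respects $\textbf{S}$-provable equivalence. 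Routing the construction through $\KM$, where replacement is available, and only then invoking Definition~\ref{D:km-sublogic}-$(ii)$ (equivalently, Lemma~\ref{L:km-enrichable-is-kms-enrichable}) is exactly the clean way to justify the step the paper leaves implicit; your contrapositive formulation also absorbs the trivial-variety case automatically, since in that case no $\square$-free formula fails in $V$.
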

\begin{proof}
Indeed, let a variety $\mathcal{V}$ of Heyting algebras be defined as an  equational class by a set $\Gamma$ of $\square$-free formulas. If the set $\Gamma$ is inconsistent, the variety $\mathcal{V}$ is trivial, that is it consists (up to isomorphism) of the one-element algebra, which is obviously {\km-\textbf{S}}-enrichable.

Next assume that $\Gamma$ is consistent. Then there is a formula $A$ such that $\Int+\Gamma\not\vdash A$. By virtue of Proposition~\ref{P:km-sublogic-equipolence}, $\textbf{S}+\Gamma
\not\vdash A$. Hence there is a \km-\textbf{S}-algebra $(\fA,\square)$ which separates $\Gamma$ from $A$. It is obvious that
$\fA$ is \km-\textbf{S}-enrichable.
\end{proof}
\begin{cor}
Any Heyting algebra is embeddable into
a \emph{\km-\textbf{S}}-enrichable algebra $($in particular, in \emph{\mHC}-enrichable algebra$)$ such that the latter generates the same variety of Heyting algebras as does the first algebra.
\end{cor}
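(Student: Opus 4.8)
The plan is to reduce the assertion to the construction of one well-chosen over-algebra and then to obtain that over-algebra as a direct limit. Write $\mathcal{V}=\mathbf{HSP}(\fA)$ for the variety of Heyting algebras generated by $\fA$. It suffices to produce a Heyting algebra $\fB$ with $\fA\hookrightarrow\fB$, with $\fB\in\mathcal{V}$, and with $\fB$ \KM-enrichable. Indeed, the embedding gives $\mathcal{V}\sbe\mathbf{HSP}(\fB)$ and $\fB\in\mathcal{V}$ gives the reverse inclusion, so $\fB$ generates exactly $\mathcal{V}$; moreover, by Lemma~\ref{L:km-enrichable-is-kms-enrichable} a \KM-enrichable algebra is \km-\textbf{S}-enrichable for every \KM-sublogic \textbf{S}, and since a \KM-algebra satisfies the identities $(i)$--$(iii)$ of Definition~\ref{D:enrichable-algebras}, it is in particular an \mHC-algebra (a frontal Heyting algebra). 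Thus a single \KM-enrichable over-algebra inside $\mathcal{V}$ settles every case of the corollary simultaneously.

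The reason one cannot simply invoke Corollary~\ref{C:kms-enrichable-generate} is that the \KM-enrichable algebras are closed under products and ultraproducts but not under homomorphic images or subalgebras, so a pure generation statement yields no embedding: a dense chain such as $[0,1]$, whose only enrichable proper Heyting quotient is the two-element algebra, embeds into no product of its enrichable quotients. Instead I would adjoin the modality freely. Form a chain of Heyting embeddings $\fA=\fA_{0}\hookrightarrow\fA_{1}\hookrightarrow\fA_{2}\hookrightarrow\cdots$, where $\fA_{n+1}$ is obtained from $\fA_{n}$ by adjoining, for each $a\in\fA_{n}$, a fresh element playing the role of $\square a$ and subject exactly to the \KM-identities $(i)$--$(iv)$; that is, $\fA_{n+1}$ is the relatively free Heyting extension making the partial modality one step ``more defined.'' Put $\fB=\direct=\varinjlim\fA_{n}$. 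Since $\mathcal{V}$ is closed under directed unions of its members, $\fB\in\mathcal{V}$; since every element of $\fB$ lies in some $\fA_{n}$ and its value $\square a$ was already supplied in $\fA_{n+1}\sbe\fB$, the operation $\square$ is total on $\fB$; and the imposed relations make $(\fB,\square)$ a \KM-algebra. Equivalently, $\fB$ is the Heyting reduct of the free \KM-algebra over $\fA$, presented as this direct limit.

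Everything then rests on the single step $\fA_{n}\mapsto\fA_{n+1}$, and this is where I expect the main difficulty to lie. Two properties must be established. First, conservativity: the map $\fA_{n}\hookrightarrow\fA_{n+1}$ must be an injective Heyting homomorphism, so that the limit does not collapse $\fA$ and $\fA\to\fB$ is a genuine embedding; equivalently, the unit of the free-\KM-algebra adjunction must be monic. Second, variety-preservation: $\fA_{n+1}\in\mathbf{HSP}(\fA_{n})\sbe\mathcal{V}$, so that the limit cannot escape $\mathcal{V}$. The natural route to the second property is to carry out the adjunction of $\square$ \emph{relative to} $\mathcal{V}$, and here Corollary~\ref{C:kms-enrichable-generate} does the essential work: because $\mathcal{V}$ is generated by its enrichable members, adjoining $\square$ subject to $(i)$--$(iv)$ forces no Heyting identity beyond those already valid throughout $\mathcal{V}$, so the Heyting reduct of each extension remains in $\mathcal{V}$ and the construction does not degenerate. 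Proving conservativity and variety-preservation for this single modal extension, and checking that the local choices cohere so that $\square$ validates $(i)$--$(iv)$ globally on $\fB$ and not merely inside each $\fA_{n}$, is the technical heart; once it is in hand, the direct-limit passage and the reduction of the first paragraph finish the proof.
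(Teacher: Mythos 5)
Your opening reduction is exactly the paper's: the paper also settles the corollary by producing a single \KM-enrichable extension generating the same variety and then invoking Lemma~\ref{L:km-enrichable-is-kms-enrichable}. The difference is that the paper does not \emph{prove} the existence of that extension --- it cites it as a known theorem (announced as Corollary~2 of~\cite{kuz85b} and derived from Kuznetsov's Theorem in~\cite{mur08}, Remark~3) --- whereas you undertake to prove it and then leave the proof out. You explicitly defer ``conservativity,'' ``variety-preservation,'' and the global coherence of the identities to an unproven ``technical heart.'' But that heart \emph{is} the theorem: the direct-limit scaffolding and the first-paragraph reduction surrounding it are routine. So the proposal has a genuine gap, and it sits exactly where the mathematical content of the corollary lies.

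The gap is not fillable by the means you point to. First, injectivity: for the free construction relative to the variety $\mathcal{W}$ of \KM-algebras whose Heyting reducts satisfy the identities of $\mathcal{V}$, the unit $\fA\to\fA_{1}$ is injective if and only if $\fA$ embeds into the Heyting reduct of \emph{some} member of $\mathcal{W}$ --- which, since $\fA$ generates $\mathcal{V}$, is precisely the statement being proved; freeness gives no leverage and the argument is circular unless injectivity is supplied from outside. The known external source is syntactic: present the free algebra as the Lindenbaum algebra of $\KM+\Gamma+\Delta(\fA)$, where $\Gamma$ axiomatizes $\mathcal{V}$ and $\Delta(\fA)$ is the diagram of $\fA$, and derive $\KM+\Gamma+\Delta(\fA)\not\vdash p_{a}\leftrightarrow p_{b}$ (for $a\neq b$ in $\fA$) from the corresponding $\Int$-nondeducibility, which $\fA$ itself witnesses, via the assertoric equipollence (Corollary~\ref{T:kuznestov}, Proposition~\ref{P:km-sublogic-equipolence}). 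Your proposal never invokes the equipollence results at all, and your substitute --- that Corollary~\ref{C:kms-enrichable-generate} ensures ``no Heyting identity beyond $\mathcal{V}$ is forced'' --- does not address injectivity; as you yourself observe, generation statements say nothing about embeddings. Second, the stagewise construction has the coherence problem you flag but do not solve: identity $(iii)$ of Definition~\ref{D:enrichable-algebras}, $\square x\le y\vee(y\ra x)$, quantifies over all $y$ of the final algebra, so imposing it at stage $n+1$ for $y$ already present does not secure it for elements created later. (Incidentally, your closure claim is also off: the class of \KM-enrichable algebras \emph{is} closed under homomorphic images, since every Heyting filter of a \KM-algebra respects $\square$ by $\KM\vdash(p_{0}\ra p_{1})\ra(\square p_{0}\ra\square p_{1})$; the failure is under subalgebras --- though your conclusion that generation does not yield embedding stands.) To repair the proof, either cite the known embedding theorem as the paper does, or carry out the Lindenbaum-algebra argument sketched above, which is where Kuznetsov's Theorem genuinely enters.
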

\begin{proof}
It is known that any Heyting algebra $\fA$ is embedded into  such a \KM-enrichable algebra $\fB$ that $\fA$ and $\fB$ generate the same variety.\footnote{This property was announced as Corollary 2 of~\cite{kuz85b} and was derived from Kuznetsov's Theorem in~\cite{mur08}, Remark 3.} Let us fix any $\KM$-sublogic \textbf{S}.
By virtue of Lemma~\ref{L:km-enrichable-is-kms-enrichable}, $\fB$ is \km-\textbf{S}-enrichable.
\end{proof}

\section{Final remarks}\label{S:conclusion}
Let \textbf{S} be a $\KM$-sublogic. We define
\[
\lambda^{\ast}(\textbf{S})::=\set{A}{A\in\textbf{S}}
\]
and call $\lambda^{\ast}(\text{S})$ the \textit{assertoric fragment of} \textbf{S}. Let $\next{\textbf{S}}$ be the lattice of all normal extensions of \textbf{S}. Then, with the help of Proposition~\ref{P:km-sublogic-equipolence}, it can be proven that $\lambda^{\ast}$ is a join epimorphism of $\next{\textbf{S}}$ onto $\next{\Int}$. Moreover, if $\textbf{S}=\mHC$, then there are
a lattice isomorphism $\tau^{\ast}$ and a join epimorphism $\mu^{\ast}$ such that the following diagram is commutative: 
\[
\xymatrix{
	\text{NE}\mHC \ar@<1ex>[r]^{\tau^{\ast}} \ar@<1ex>[r];[]^{\tau^{\ast-1}}
	\ar[d]_{\lambda^{\ast}} &\text{NE}\kfourGrz\ar[d]^{\mu^{\ast}}\\
	\text{NE}\Int \ar@<1ex>[r]^\sigma  \ar@<1ex>[r];[]^{\sigma^{-1}} &\text{NE}\Grz	
}
\]
\begin{center}
	Diagram 2
\end{center}
where 
\[
\kfourGrz::=\textbf{K4} + \square(\square(p_0\rightarrow\square p_0)\rightarrow p_0)\rightarrow\square p_0
\]
and $\sigma$ is the same as in Diagram 1; cf.~\cite{mur15, mur17}. This inspires us to propose the following.\\

\noindent\textbf{Conjecture.} For any $\KM$-sublogic \textbf{S}, there is a normal extension \textbf{L} of \textbf{K}, which is included in $\GL$, and there are a lattice isomorphism $\tau^{\ast}$ and a join epimorphism $\mu^{\ast}$ such that the following diagram is commutative: 
\[
\xymatrix{
	\text{NE}\textbf{S} \ar@<1ex>[r]^{\tau^{\ast}} \ar@<1ex>[r];[]^{\tau^{\ast-1}}
	\ar[d]_{\lambda^{\ast}} &\text{NE}\textbf{L}\ar[d]^{\mu^{\ast}}\\
	\text{NE}\Int \ar@<1ex>[r]^\sigma  \ar@<1ex>[r];[]^{\sigma^{-1}} &\text{NE}\Grz	
}
\]
\begin{center}
	Diagram 3
\end{center}


\end{document}